\documentclass[a4paper,12pt]{article}
\usepackage[T2A]{fontenc}                       
\usepackage[cp1251]{inputenc}            
\usepackage[all]{xy}
\usepackage[english]{babel}
\usepackage{amssymb}
\usepackage{cite}
\usepackage{cmap}
\usepackage{latexsym}
\usepackage{amsmath}
\usepackage{enumerate}
\usepackage{amsmath, amsthm, amscd, amsfonts, amssymb, graphicx, color}
 \usepackage[left=2.5cm,right=1.5cm,top=1.3cm,bottom=2cm]{geometry}
\usepackage{indentfirst}

\newtheorem{thm}{Theorem}
\newtheorem{cor}{Corollary}[thm]
\newtheorem{lem}[thm]{Lemma}
\newtheorem{prop}[thm]{Proposition}

\newtheorem{defn}{Definition}

\newtheorem{example}{Example}[section]
\newtheorem{quest}{Question}[section]

\begin{document}

\begin{center}

\phantom{}

{\Large On the  Length of a Maximal Subgroup of a Finite Group\footnote{Supported by BRFFR $\Phi23\textrm{PH}\Phi\textrm{-}237$}}

\phantom{}

Viachaslau I. Murashka and Alexander F. Vasil'ev

\{mvimath@yandex.ru, formation56@mail.ru\}

\phantom{}

Faculty of Mathematics and Technologies of Programming,
Francisk Skorina Gomel State University, Sovetskaya 104, Gomel,
246028, Belarus

\end{center}

\begin{abstract}
  For a finite group $G$ and its maximal subgroup $M$ we proved that the generalized Fitting height of $M$ can't be less by 2 than the generalized Fitting height of $G$ and the non-$p$-soluble length of $M$ can't be less by 1 than the non-$p$-soluble length of $G$. We constructed  a hereditary saturated formation $\mathfrak{F}$ such that $\{n_\sigma(G, \mathfrak{F})-n_\sigma(M, \mathfrak{F})\mid G$ is finite $\sigma$-soluble and $M$ is a maximal subgroup of $G\}=\mathbb{N}\cup\{0\}$ where $n_\sigma(G, \mathfrak{F})$ denotes the $\sigma$-nilpotent length of the $\mathfrak{F}$-residual of  $G$.
This construction shows the results about  the generalized lengths of maximal subgroups published in Math. Nachr. (1994) and Mathematics (2020) are not correct.

\textbf{Keywords}: Finite group; the generalized Fitting subgroup; the generalized Fitting height; the non-$p$-soluble length, hereditary Plotkin radical, $\sigma$-nilpotent group.

\end{abstract}

\section*{Introduction and the Main results}

All groups considered here are finite. One way to study the structure of finite groups is to study their given normal series. An important parameter of such series is their length. For example the derived length, the nilpotent length and the $p$-length encode information about the structure of a group. Note that the series defining the nilpotent length were used for computations in soluble (polycyclic) groups \cite{Cannon2004}.
One of the main disadvantages of the above mentioned lengths is that they are not defined for all groups.
Khukhro and Shumyatsky \cite{Khukhro2015a, Khukhro2015b} introduced the following lengths associated with every group.

\begin{defn}[Khukhro, Shumyatsky]
  $(1)$ The generalized Fitting height $h^*(G)$ of a finite group $G$ is the least number $h$ such that $\mathrm{F}_h^* (G) = G$, where $\mathrm{F}_{(0)}^* (G) = 1$, and $\mathrm{F}_{(i+1)}^*(G)$ is the inverse image of the generalized Fitting subgroup $\mathrm{F}^*(G/\mathrm{F}^*_{(i)} (G))$.

  $(2)$  Let $p$ be a prime, $1=G_0\leq G_1\leq\dots\leq G_{2h+1}=G$ be the shortest normal series in which for $i$ odd the factor $G_{i+1}/G_i$ is $p$-soluble $($possibly trivial$)$,
and for $i$ even the factor $G_{i+1}/G_i$ is a $($non-empty$)$ direct product of nonabelian simple
groups. Then $h=\lambda_p(G)$ is called the non-$p$-soluble length of a group $G$.

$(3)$  $\lambda_2(G)=\lambda(G)$ is the nonsoluble length of a group $G$.
\end{defn}

Note that if $G$ is a soluble group, then $h^*(G)=h(G)$ is the nilpotent length of $G$.
For the properties and applications of these lengths see \cite{Fumagalli2019, Guralnick2020, KHUKHRO2015, Khukhro2015a, Khukhro2015b, Murashka2023}. From \cite[Theorem 5.6]{Fumagalli2019} it follows that for a group $G$ and its  subgroup $H$ the differences $h^*(G)-h^*(H)$ and $\lambda_p(G)-\lambda_p(H)$ are not bounded from below by a constant. Doerk \cite[Satz 1]{Doerk1994} proved that the difference of the nilpotent lengths of a soluble group and its  maximal subgroup can be only 0, 1 or 2. The analogues of this result were obtained for the $\pi$-length  of a $\pi$-soluble group \cite{Monakhov2009} and the $\sigma$-nilpotent length  of a $\sigma$-soluble group \cite{math8122165}. Here we prove

\begin{thm}\label{thm0}
  Let $M$ be a maximal subgroup of a group $G$ and $p$ be a prime.  Then
  $$h^*(G)-h^*(M)\leq 2, \lambda(G)-\lambda(M)\leq 1\textrm{ and }\lambda_p(G)-\lambda_p(M)\leq 1.$$
\end{thm}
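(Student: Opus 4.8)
The plan is to prove the three inequalities in a uniform way, inducting on $|G|$ and using that each length $\ell\in\{h^*,\lambda,\lambda_p\}$ is obtained by iterating a first-layer operator: for $h^*$ one peels off $\mathrm{F}^*(G)$, and for $\lambda_p$ one peels off the semisimple socle lying over the $p$-soluble radical (the non-$p$-soluble length $\lambda=\lambda_2$ is the case $p=2$, so its inequality follows from the one for $\lambda_p$). For every such $\ell$ I will use three standing facts: (i) $\ell(G/N)\le\ell(G)$ for all $N\trianglelefteq G$; (ii) a peeling identity $\ell(G)=\ell(G/L)+1$ when $L$ is the first layer and $G$ does not already lie below it; and (iii) that every normal quasinilpotent subgroup of a group $H$ lies in $\mathrm{F}^*(H)$. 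Since a minimal normal subgroup is always quasinilpotent, (iii) gives, whenever $\mathrm{F}^*(G)\le M$, the inclusion $\mathrm{F}^*(G)\le\mathrm{F}^*(M)$ and, by induction on the level, $\mathrm{F}^*_{(i)}(G)\le\mathrm{F}^*_{(i)}(M)$ as long as $\mathrm{F}^*_{(i)}(G)\le M$.

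The easy half is the case in which the first relevant layer $L$ of $G$ is not contained in $M$. Maximality then forces $G=LM$, so $G/L\cong M/(M\cap L)$ is a quotient of $M$ and $\ell(M)\ge\ell(G/L)=\ell(G)-1$. More precisely, letting $k$ be least with $\mathrm{F}^*_{(k)}(G)\not\le M$, the isomorphism $G/\mathrm{F}^*_{(k)}(G)\cong M/(M\cap\mathrm{F}^*_{(k)}(G))$ yields $h^*(M)\ge h^*(G)-k$ at once, so for $h^*$ all the difficulty sits in the range $k\ge 3$, that is $\mathrm{F}^*_{(2)}(G)\le M$; the same computation disposes of $\lambda_p$ as soon as the semisimple socle above the $p$-soluble radical escapes $M$.

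To sharpen the crude bound $h^*(M)\ge h^*(G)-k$ I would map the generalized Fitting series of $M$ into $G/\mathrm{F}^*_{(k)}(G)$. Since $G=M\,\mathrm{F}^*_{(k)}(G)$, every $\mathrm{F}^*_{(i)}(M)$ maps onto a normal subgroup of $G/\mathrm{F}^*_{(k)}(G)$, and the images form a normal series with quasinilpotent factors; letting $j$ be the largest index with $\mathrm{F}^*_{(j)}(M)\le\mathrm{F}^*_{(k)}(G)$, this series has length $h^*(M)-j$ and reaches all of $G/\mathrm{F}^*_{(k)}(G)$, whence $h^*(G)-k=h^*\!\big(G/\mathrm{F}^*_{(k)}(G)\big)\le h^*(M)-j$ and $h^*(M)\ge h^*(G)-(k-j)$. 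Thus the theorem for $h^*$ reduces to the single estimate $k-j\le 2$, i.e. $\mathrm{F}^*_{(k-2)}(M)\le\mathrm{F}^*_{(k)}(G)$. To organize its proof I would pass to $\bar G=G/\operatorname{core}_G(M)$, where $\bar M$ is a core-free maximal subgroup and $\bar G$ is primitive; a direct inspection of the O'Nan--Scott types (using $N\bar M=\bar G$ and $C_{\bar G}(\mathrm{F}^*(\bar G))\le\mathrm{F}^*(\bar G)$) shows $h^*(\bar G)-h^*(\bar M)\le 1$, and, because abelian layers are $p$-soluble, the affine type contributes nothing to $\lambda_p$. This last point is exactly why the bound for $\lambda$ and $\lambda_p$ is the sharper $1$ rather than $2$.

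The main obstacle is the estimate $k-j\le 2$ (and its one-layer analogue for $\lambda_p$): it asserts that at most two consecutive quasinilpotent layers of $G$ can be absorbed into $M$ without being recorded by the series of $M$, and it is precisely here that maximality, rather than mere containment, must enter — consistent with the cited fact that for arbitrary subgroups the differences are unbounded below. I expect to prove it by playing the self-centralizing property $C_{\bar G}(\mathrm{F}^*(\bar G))\le\mathrm{F}^*(\bar G)$ in the successive quotients $\bar G=G/\mathrm{F}^*_{(i-1)}(G)$ off against the factorization $G=M\,\mathrm{F}^*_{(k)}(G)$, showing that if the action of $M$ on the top layer of $G$ stayed hidden from $\mathrm{F}^*(M)$ for three stages it would force $\mathrm{F}^*_{(k)}(G)\le M$, against the choice of $k$. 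For $\lambda_p$ the identical bookkeeping applies with ``quasinilpotent layer'' replaced by ``semisimple socle over the $p$-soluble radical'', and since each such layer carries no abelian contribution the count tightens to $k-j\le 1$, giving the bound $1$.
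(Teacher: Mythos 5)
Your reduction is sound and runs parallel to the paper's: you correctly observe that if $k$ is least with $\mathrm{F}^*_{(k)}(G)\not\le M$, then $G=M\,\mathrm{F}^*_{(k)}(G)$ and the images of the $\mathrm{F}^*_{(i)}(M)$ give a normal series of $G/\mathrm{F}^*_{(k)}(G)$ with quasinilpotent factors, so that everything hinges on showing $\mathrm{F}^*_{(k-2)}(M)\le\mathrm{F}^*_{(k)}(G)$ (and the one-layer analogue for $\lambda_p$). You have also named the right tools: the self-centralizing property $C_G(\mathrm{F}^*(G))\le\mathrm{F}^*(G)$ and the fact that a normal quasinilpotent subgroup of $M$ lies in $\mathrm{F}^*(M)$. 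But the central estimate is never proved — you write that you ``expect to prove it by playing the self-centralizing property off against the factorization,'' which is a plan, not an argument. The actual mechanism (this is where the paper does all its work) is an induction establishing $\mathrm{F}^*_{(i)}(M)\subseteq\mathrm{F}^*_{(i+1)}(G)$ for every $i$ with $\mathrm{F}^*_{(i+1)}(G)\le M$: one first shows $M_{i+1}\cap G_{i+2}\le G_{i+1}$ using the hereditary property of the radical (for $I=M_{i+1}\cap G_{i+2}\trianglelefteq M$ one has $\gamma_{(i+1)}(I)=I$ and also $\gamma_{(i+1)}(I)=I\cap G_{i+1}$), then observes that $M_{i+1}G_{i+1}/G_{i+1}$ and $G_{i+2}/G_{i+1}$ are normal subgroups of $M/G_{i+1}$ with trivial intersection, hence commute, and finally applies $C_{G/G_{i+1}}(\mathrm{F}^*(G/G_{i+1}))\le\mathrm{F}^*(G/G_{i+1})\le G_{i+2}/G_{i+1}$ to conclude $M_{i+1}\le G_{i+2}$. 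None of these three steps appears in your text, and the phrase ``stayed hidden from $\mathrm{F}^*(M)$ for three stages'' does not substitute for them.

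Two further points. First, the detour through $\bar G=G/\operatorname{core}_G(M)$ and the O'Nan--Scott types does not help: the lengths $h^*(G)$ and $h^*(M)$ are not controlled by $h^*(\bar G)$ and $h^*(\bar M)$, so a bound for core-free maximal subgroups of primitive groups does not transfer back to the general case; this part of your sketch should be dropped. Second, for $\lambda_p$ the claim that ``the count tightens to $k-j\le 1$ because the layer carries no abelian contribution'' is again only an assertion; the paper obtains the bound $1$ by a genuinely different and more delicate argument (its Theorem~\ref{thm2}), which in the critical configuration forces $\mathrm{F}^*(M)=\mathrm{F}^*(G)$ and then derives $h_\gamma(G)=2$, $h_\gamma(M)=1$, and it must also separately handle the case where $M$ is $p$-soluble while $G$ is not, since there the non-$p$-soluble length is not computed by the same iterated radical. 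As it stands, your proposal identifies the correct skeleton but leaves the decisive containment unproved for all three lengths.
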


This theorem is the consequence of two general results obtained via the functorial method. According to Plotkin \cite{Plotkin1973} a functorial   is a function $\gamma$ which assigns to each group $G$ its   subgroup $\gamma(G)$   satisfying $f(\gamma(G)) =\gamma(f(G)) $ for any isomorphism $f: G \to G^*$.
From \cite[p. 27 and Proposition 3.2.3]{Gardner2003} follows the following definition:
\begin{defn}
A functorial $\gamma$ is called a hereditary Plotkin radical if it satisfies:

 $(P1)$ $f(\gamma(G))\subseteq \gamma(f(G))$ for every epimorphism $f: G\to G^*$.

$(P2)$  $\gamma(G)\cap N=\gamma(N)$ for every $N\trianglelefteq G$.
\end{defn}

Note that the $\mathfrak{F}$-radical for a Fitting formation is a hereditary Plotkin radical.  Recall \cite{Plotkin1973} that  for functorials
  $\gamma_1$ and $\gamma_2$  the upper product $\gamma_2\star\gamma_1$ is  defined by $(\gamma_2\star\gamma_1)(G)/\gamma_2(G)=\gamma_1(G/\gamma_2(G))$. This operation is an associative one.
With every functorial one can associate the following length.

   \begin{defn}[\hspace{-1pt}{\cite[Definition 2.4]{Murashka2023}}]\label{hun}
  Let $\gamma$ be a functorial. Then the $\gamma$-series of $G$ is defined  starting from $\gamma_{(0)}(G)=1$, and then by induction $\gamma_{(i+1)}(G)=(\gamma_{(i)}\star\gamma)(G)$. The least number $h$ such that $\gamma_{(h)}(G)=G$ is defined to be the $\gamma$-length $h_\gamma(G)$ of $G$. If there is no such number, then $h_\gamma(G)=\infty$.
\end{defn}

If $\gamma=\mathrm{F}$ assigns to every group its Fitting subgroup, then the $\gamma$-length is just the Fitting height (length) and for a group $G$ is denoted by $l(G)$ or $h(G)$. For $\gamma=\mathrm{F}^*$ we get the generalized Fitting height. One of our main results is

\begin{thm}\label{thm1}
  Let $\gamma$   be a hereditary Plotkin radical which satisfies   $\mathrm{F}^*(G)\subseteq\gamma(G)$ for any group $G$ with $h_\gamma(G)<\infty$. If $M$ is a maximal subgroup of a group $G$ and  $h_\gamma(G), h_\gamma(M)<\infty$, then  $h_\gamma(G)-h_\gamma(M)\leq 2$.
\end{thm}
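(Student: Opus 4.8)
The plan is to first record how the functorial length $h_\gamma$ reacts to the basic operations, then to use maximality to reduce the whole statement to a single inequality that is controlled precisely by the self-centralizing behaviour forced by $\mathrm{F}^*(G)\subseteq\gamma(G)$.

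First I would establish the elementary behaviour of $h_\gamma$. From $(P1)$ one gets $h_\gamma(G/K)\le h_\gamma(G)$ for every $K\trianglelefteq G$, since the images of the terms $\gamma_{(i)}(G)$ lie inside the terms $\gamma_{(i)}(G/K)$. From $(P2)$, applied inside the successive quotients $G/\gamma_{(i)}(G)$, one proves by induction the trace identity $\gamma_{(i)}(N)=\gamma_{(i)}(G)\cap N$ for $N\trianglelefteq G$; this yields at once $h_\gamma(N)\le h_\gamma(G)$, the inclusion $N\le\gamma_{(h_\gamma(N))}(G)$, and the subadditivity $h_\gamma(G)\le h_\gamma(N)+h_\gamma(G/N)$ (split the $\gamma$-series of $G$ at $\gamma_{(h_\gamma(N))}(G)\supseteq N$ and use that $\gamma_{(a+j)}(G)/\gamma_{(a)}(G)=\gamma_{(j)}(G/\gamma_{(a)}(G))$). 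In particular, if $h_\gamma(K)\le 1$ — for instance $K=\gamma(X)$, or $K$ a minimal normal subgroup, for which $\gamma(K)=\gamma(G)\cap K=K$ because $K\le\mathrm{soc}(G)\le\mathrm{F}^*(G)\subseteq\gamma(G)$ — then $h_\gamma(G)\le h_\gamma(G/K)+1$. Finally, the hypothesis $\mathrm{F}^*(G)\subseteq\gamma(G)$ together with $C_G(\mathrm{F}^*(G))\le\mathrm{F}^*(G)$ gives the decisive self-centralizing property $C_G(\gamma(G))\le\gamma(G)$; equivalently, every nontrivial normal subgroup of $G$ meets $\gamma(G)$ nontrivially.

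The engine is a complement lemma exploiting maximality: if $K\trianglelefteq G$ and $K\not\le M$, then $KM=G$, so $M/(K\cap M)\cong G/K$ and hence $h_\gamma(M)\ge h_\gamma(G/K)$. Taking $K=\gamma(G)$ shows that whenever $\gamma(G)\not\le M$ one already has $h_\gamma(M)\ge h_\gamma(G/\gamma(G))=h_\gamma(G)-1$, a difference of at most $1$. Likewise, if some minimal normal subgroup $L$ satisfies $L\not\le M$, then $h_\gamma(M)\ge h_\gamma(G/L)\ge h_\gamma(G)-1$ by the drop-by-one inequality. Thus any extremal configuration must have $\gamma(G)\le M$ and $\mathrm{soc}(G)\le M$. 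To finish I would take a counterexample $G$ of least order with $h_\gamma(G)-h_\gamma(M)\ge 3$; by the above, $\gamma(G)\le M$ and every minimal normal subgroup lies in $M$. Fixing a minimal normal $L$, minimality gives $h_\gamma(G/L)-h_\gamma(M/L)\le 2$, and combining this with $h_\gamma(G/L)\ge h_\gamma(G)-1$ and $h_\gamma(M/L)\le h_\gamma(M)$ forces equality throughout, so $h_\gamma(M/L)=h_\gamma(M)$ and $h_\gamma(G/L)=h_\gamma(G)-1$. Concretely, writing $r$ for the largest index with $\gamma_{(r)}(G)\le M$, the two available bounds, $h_\gamma(M)\ge r$ (from $\gamma_{(r)}(G)\trianglelefteq M$) and $h_\gamma(M)\ge h_\gamma(G)-r-1$ (the complement lemma applied to $\gamma_{(r+1)}(G)$), have to be glued additively; via subadditivity this reduces the entire theorem to the single inequality $h_\gamma\big(\gamma_{(r)}(M)/\gamma_{(r)}(G)\big)\le 1$.

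The main obstacle is exactly this last inequality: it measures by how much the intrinsic $\gamma$-series of $M$ may \emph{overshoot} the trace of the $\gamma$-series of $G$, and it is simply false for a general hereditary Plotkin radical. This is where $\mathrm{F}^*(G)\subseteq\gamma(G)$ must enter: passing to $\overline{G}=G/\gamma_{(r)}(G)$, the factor $\gamma(\overline{G})=\gamma_{(r+1)}(G)/\gamma_{(r)}(G)$ is self-centralizing, $\overline{M}=M/\gamma_{(r)}(G)$ is maximal, and by maximality $\gamma(\overline{G})\,\overline{M}=\overline{G}$. I expect the bound to follow by showing, through this complement decomposition together with $C_{\overline G}(\gamma(\overline G))\le\gamma(\overline G)$, that the image of $\gamma_{(r)}(M)$ cannot build more than one new $\gamma$-layer above $\gamma_{(r)}(G)$. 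This self-centralizing input is precisely what fails for the $\sigma$-nilpotent radical, which is what permits the unbounded differences constructed elsewhere in the paper.
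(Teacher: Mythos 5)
Your preparatory material is sound and is in fact exactly the toolkit the paper uses: the monotonicity and trace identities coming from $(P1)$ and $(P2)$, the subadditivity $h_\gamma(G)\le h_\gamma(N)+h_\gamma(G/N)$, the complement lemma $h_\gamma(M)\ge h_\gamma(G/K)$ when $K\not\le M$, and the self-centralizing property $C_G(\gamma(G))\le C_G(\mathrm{F}^*(G))\le\mathrm{F}^*(G)\le\gamma(G)$. Your reduction of the theorem to the single inequality $h_\gamma\bigl(\gamma_{(r)}(M)/\gamma_{(r)}(G)\bigr)\le 1$ can indeed be made rigorous: writing $N=M\cap\gamma_{(r+1)}(G)$, one has $h_\gamma(M)=r+h_\gamma(M/\gamma_{(r)}(M))$, and $h_\gamma(M/\gamma_{(r)}(M))\ge h_\gamma(M/N)-h_\gamma\bigl(\gamma_{(r)}(M)N/N\bigr)\ge (h_\gamma(G)-r-1)-1$ by Lemma \ref{lem1}, since $\gamma_{(r)}(M)N/N$ is a quotient of $\gamma_{(r)}(M)/\gamma_{(r)}(G)$. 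The problem is that you never prove the inequality you reduced to; you only say you \emph{expect} it to follow from the complement decomposition in $\overline G=G/\gamma_{(r)}(G)$ together with $C_{\overline G}(\gamma(\overline G))\le\gamma(\overline G)$. A single application of the self-centralizing property at the top layer cannot deliver it: how far $\gamma_{(r)}(M)$ overshoots $\gamma_{(r)}(G)$ is governed by what happens at every layer $i<r$, and nothing in your argument controls those lower layers. (Your minimal-normal-subgroup thread is also a dead end: it only forces $h_\gamma(G/L)=h_\gamma(G)-1$ and $h_\gamma(M/L)=h_\gamma(M)$, which is perfectly consistent with a counterexample.)

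The missing content is the paper's inductive invariant $\gamma_{(i)}(M)\subseteq\gamma_{(i+1)}(G)\subseteq M$, established layer by layer. Its proof at step $i$ needs two facts absent from your sketch: (a) $\gamma_{(i+2)}(G)\le M$ (otherwise the complement lemma already yields difference at most $2$), which makes $I=\gamma_{(i+2)}(G)\cap\gamma_{(i+1)}(M)$ normal in $\gamma_{(i+2)}(G)$ and hence, by the trace identity for $\gamma_{(i+1)}$, forces $I\le\gamma_{(i+1)}(G)$; and (b) the centralizer argument in $G/\gamma_{(i+1)}(G)$: the image of $\gamma_{(i+1)}(M)$ meets $\gamma_{(i+2)}(G)/\gamma_{(i+1)}(G)\supseteq\mathrm{F}^*(G/\gamma_{(i+1)}(G))$ trivially, hence centralizes $\mathrm{F}^*(G/\gamma_{(i+1)}(G))$ and is therefore contained in it. This is where $\mathrm{F}^*\subseteq\gamma$ enters, once per layer, not once at the top. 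Note moreover that at your boundary layer $r$ the trace argument in (a) is unavailable precisely because $\gamma_{(r+1)}(G)\not\le M$, so $\gamma_{(r)}(M)\cap\gamma_{(r+1)}(G)$ need not be normal in $\gamma_{(r+1)}(G)$; the induction must be run strictly below the boundary, and only then does $\gamma_{(r-1)}(M)\subseteq\gamma_{(r)}(G)\subseteq\gamma_{(r)}(M)$ give $h_\gamma(\gamma_{(r)}(M)/\gamma_{(r)}(G))\le h_\gamma(\gamma_{(r)}(M)/\gamma_{(r-1)}(M))\le 1$. As written, your proposal correctly isolates the obstacle but does not overcome it, so there is a genuine gap at the decisive step.
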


From \cite[Theorem 3.1 and Corollary 3.4(A)]{Baer1966} it follows that if $\gamma$ is a hereditary Plotkin radical iff $\mathfrak{F}=(G\mid \gamma(G)=G)$ is a $Q$-closed Fitting class and $\gamma$ is the $\mathfrak{F}$-radical. The example of a $Q$-closed Fitting class of soluble groups which is not a formation follows from    \cite[IX, Examples 2.21(b)]{Doerk1992}. Theorem \ref{thm1} gives the analogues of Doerk's result  for any  $Q$-closed Fitting class of soluble groups.

\begin{cor}\label{cor}
   Let $\mathfrak{F}$ be a $Q$-closed Fitting class of soluble groups, $\gamma$ assigns to every group its $\mathfrak{F}$-radical and $\pi=\pi(\mathfrak{F})$. If $M$ is a maximal subgroup of a soluble $\pi$-group $G$, then
  $h_\gamma(G)-h_\gamma(M)\leq 2$.
\end{cor}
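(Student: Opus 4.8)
The plan is to deduce the corollary from Theorem \ref{thm1}, applied to the functorial $\gamma$ that sends each group to its $\mathfrak{F}$-radical $G_{\mathfrak{F}}$, working inside the universe of soluble $\pi$-groups. Since $\mathfrak{F}$ is a $Q$-closed Fitting class and $\gamma$ is its radical, the cited results of Baer guarantee that $\gamma$ is a hereditary Plotkin radical, so $(P1)$ and $(P2)$ come for free. It therefore remains to verify the two running hypotheses of Theorem \ref{thm1}: that $\mathrm{F}^*(G)\subseteq\gamma(G)$ for every $G$ with $h_\gamma(G)<\infty$, and that $h_\gamma(G)$ and $h_\gamma(M)$ are finite. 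I would reduce both to a single statement, namely that $\mathfrak{F}$ contains every nilpotent $\pi$-group, $\mathfrak{N}_\pi\subseteq\mathfrak{F}$.

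Granting $\mathfrak{N}_\pi\subseteq\mathfrak{F}$, the two hypotheses follow quickly. For a soluble group $G$ there are no quasisimple subnormal subgroups, so the generalized Fitting subgroup reduces to the Fitting subgroup, $\mathrm{F}^*(G)=\mathrm{F}(G)$; as $\mathrm{F}(G)$ is a nilpotent $\pi$-group it lies in $\mathfrak{F}$, and being normal in $G$ it is contained in $\gamma(G)$. This gives $\mathrm{F}^*(G)\subseteq\gamma(G)$ (note that any $G$ with $h_\gamma(G)<\infty$ is automatically a soluble $\pi$-group, since every $\gamma$-factor lies in $\mathfrak{F}$). The same inclusion $\mathrm{F}(G)\subseteq\gamma(G)$ forces $\gamma(G)\neq 1$ for every nontrivial soluble $\pi$-group, because $\mathrm{F}(G)\neq 1$ there; passing to the quotients $G/\gamma_{(i)}(G)$, which remain soluble $\pi$-groups, the $\gamma$-series strictly ascends until it reaches $G$, so $h_\gamma(G)<\infty$. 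Since $M\leq G$ is again a soluble $\pi$-group, $h_\gamma(M)<\infty$ as well, and Theorem \ref{thm1} yields $h_\gamma(G)-h_\gamma(M)\leq 2$.

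The heart of the argument is thus the inclusion $\mathfrak{N}_\pi\subseteq\mathfrak{F}$, and this is the step I expect to be the main obstacle. First, for each $p\in\pi=\pi(\mathfrak{F})$ one obtains $C_p\in\mathfrak{F}$: choosing $H\in\mathfrak{F}$ with $p\mid|H|$ and a chief series of the soluble group $H$, some normal section $H_{i+1}/H_i$ is an elementary abelian $p$-group; here $H_{i+1}\in\mathfrak{F}$ by $S_n$-closure and $H_{i+1}/H_i\in\mathfrak{F}$ by $Q$-closure, whence $C_p\in\mathfrak{F}$ as a further quotient. Since a nilpotent $\pi$-group is the direct product of its (normal) Sylow subgroups, $N_0$-closure reduces the claim to showing that every $p$-group lies in $\mathfrak{F}$ for $p\in\pi$. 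A non-cyclic $p$-group is the product of two distinct maximal, hence normal, subgroups, so it lies in $\mathfrak{F}$ by induction on the order together with $N_0$-closure; the delicate case is that of the cyclic groups $C_{p^n}$, where this product decomposition is unavailable. Here I would exploit that a Fitting class is closed under subnormal subgroups and that every subgroup of a finite $p$-group is subnormal: it suffices to place $C_{p^n}$ as a subgroup of a suitable $p$-group already known to be in $\mathfrak{F}$. A convenient choice is an iterated wreath product $W$ of copies of $C_p$ (a Sylow $p$-subgroup of a symmetric group), into which $C_{p^n}$ embeds; one shows $W\in\mathfrak{F}$ by induction on the number of wreath factors, at each step using $N_0$-closure to express $W$ as the product of its base (a direct power of the previous factor, in $\mathfrak{F}$ by induction) with a suitable normal complement-type subgroup that is again in $\mathfrak{F}$. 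The base case $C_p\wr C_p$ is handled directly by decomposing it as a product of two normal subgroups of exponent $p$; arranging the analogous decomposition at the higher levels, and in particular handling the odd-$p$ case where products of normal elementary abelian subgroups remain of exponent $p$, is the main technical point of the proof.
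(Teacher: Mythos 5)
Your overall reduction is exactly the one the paper uses: invoke Baer's results to see that the $\mathfrak{F}$-radical is a hereditary Plotkin radical, show that $h_\gamma(H)<\infty$ precisely for soluble $\pi$-groups $H$ and that $\mathrm{F}^*(G)=\mathrm{F}(G)\subseteq\gamma(G)$ there, and feed everything into Theorem \ref{thm1}. You have also correctly identified that everything hinges on the single inclusion $\mathfrak{N}_\pi\subseteq\mathfrak{F}$. The difference is that the paper obtains this inclusion by citing \cite[IX, Theorem 1.9]{Doerk1992} (a non-empty Fitting class contains every nilpotent group whose order involves only primes of its characteristic, and for Fitting classes the characteristic equals $\pi(\mathfrak{F})$), whereas you attempt to reprove it from scratch. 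Your derivations of $C_p\in\mathfrak{F}$ for $p\in\pi$ (via a chief series, $S_n$-closure and $Q$-closure) and of the reduction to $p$-groups and then to cyclic $p$-groups are fine.

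The gap is exactly where you flag it: the cyclic case $C_{p^n}$, $n\geq 2$. Embedding $C_{p^n}$ subnormally into an iterated wreath product $W$ of copies of $C_p$ only helps if you can prove $W\in\mathfrak{F}$, and the induction you sketch does not close. Writing $W=W_{k-1}\wr C_p$ with base $B=W_{k-1}^{\,p}$, you do get $B\in\mathfrak{F}$ from the inductive hypothesis and $N_0$-closure, but the second normal factor $M$ needed for $W=BM$ (a maximal normal subgroup of $W$ not containing $B$, of index $p$ and order $|W|/p$) is neither a smaller iterated wreath product nor a group of exponent $p$ in general (already $C_p\wr C_p$ for odd $p$ has exponent $p^2$, so "a product of two normal subgroups of exponent $p$" cannot cover it directly), so no induction hypothesis in play applies to $M$. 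The same obstruction defeats an induction on order: the auxiliary group $W$ capturing $C_{p^n}$ has order far exceeding $p^n$, so its normal factors are not covered by minimality of a counterexample of order $p^n$. Organizing the induction so that it actually terminates is precisely the non-trivial content of \cite[IX, Theorem 1.9]{Doerk1992}; as written, your argument asserts rather than proves the step $W\in\mathfrak{F}$. The cleanest repair is simply to cite that theorem, as the paper does; if you insist on a self-contained proof, you must set up the induction more carefully (and you may use the $Q$-closure of $\mathfrak{F}$, which is available here but not needed for the general Doerk--Hawkes statement) before the corollary can be considered proved.
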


Note that $h(H)\leq h(G)$ for any soluble group $G$ and its subgroup $H$. Hence from Theorem~\ref{thm1} for $\gamma=\mathrm{F}$ follows

\begin{cor}[\hspace{-1pt}\cite{Doerk1994}]\label{cor1}
   Let $M$ be a maximal subgroup of a soluble group $G$. Then
  $h(G)-h(M)\in\{0, 1, 2\}$.
\end{cor}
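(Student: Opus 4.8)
The plan is to derive Corollary~\ref{cor1} as a direct specialization of Theorem~\ref{thm1}, using nothing more than the monotonicity remark stated just before the corollary. First I would observe that for $\gamma=\mathrm{F}$, the $\gamma$-length $h_\gamma$ is by definition exactly the Fitting height $h$, as noted in the excerpt right after Definition~\ref{hun}. So I must check that the Fitting radical $\mathrm{F}$ verifies the two hypotheses of Theorem~\ref{thm1} on the class of groups in question.

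The hypotheses to verify are (i) that $\gamma=\mathrm{F}$ is a hereditary Plotkin radical, (ii) that $\mathrm{F}^*(G)\subseteq\mathrm{F}(G)$ whenever $h_\mathrm{F}(G)<\infty$, and (iii) that both $h_\mathrm{F}(G)$ and $h_\mathrm{F}(M)$ are finite. For (i), the Fitting subgroup is the $\mathfrak{N}$-radical for the Fitting formation $\mathfrak{N}$ of nilpotent groups, and the excerpt explicitly states that the $\mathfrak{F}$-radical of a Fitting formation is a hereditary Plotkin radical, so this is immediate. For (iii), a soluble group has a finite Fitting series, so $h_\mathrm{F}(G)<\infty$; since $M\leq G$ is then also soluble, $h_\mathrm{F}(M)<\infty$ as well.

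The one point requiring a small argument is (ii): I need $\mathrm{F}^*(G)\subseteq\mathrm{F}(G)$ for every $G$ with $h_\mathrm{F}(G)<\infty$, i.e.\ for every soluble $G$. This is the standard fact that for a soluble group the generalized Fitting subgroup coincides with the ordinary Fitting subgroup, which follows because a soluble group has no nonabelian simple (hence no quasisimple) composition factors, so the layer $\mathrm{E}(G)$ is trivial and $\mathrm{F}^*(G)=\mathrm{F}(G)\mathrm{E}(G)=\mathrm{F}(G)$. I expect this to be the only genuine step; everything else is bookkeeping.

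With the hypotheses confirmed, Theorem~\ref{thm1} yields $h_\mathrm{F}(G)-h_\mathrm{F}(M)\leq 2$, that is $h(G)-h(M)\leq 2$. It remains to see that the difference is nonnegative, and here I would invoke the monotonicity remark $h(H)\leq h(G)$ for any subgroup $H$ of a soluble group $G$, applied to $H=M$; this gives $h(G)-h(M)\geq 0$. Combining the two bounds yields $h(G)-h(M)\in\{0,1,2\}$, which is the assertion of the corollary. The main conceptual content all sits in Theorem~\ref{thm1}; the anticipated obstacle, namely ensuring the generalized Fitting hypothesis reduces correctly in the soluble setting, is resolved by the elementary identity $\mathrm{F}^*(G)=\mathrm{F}(G)$ for soluble $G$.
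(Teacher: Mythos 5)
Your proposal is correct and follows exactly the paper's route: the paper derives Corollary~\ref{cor1} by applying Theorem~\ref{thm1} with $\gamma=\mathrm{F}$ together with the monotonicity remark $h(H)\leq h(G)$ for subgroups of soluble groups. You merely spell out the hypothesis checks (that $\mathrm{F}$ is a hereditary Plotkin radical and that $\mathrm{F}^*(G)=\mathrm{F}(G)$ for soluble $G$) that the paper leaves implicit, and these are verified correctly.
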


 Let $\sigma=\{\sigma_i\mid i\in I\}$ be a partition of  the set of all primes $\mathbb{P}$. Recall \cite{skiba2015sigma} that a group $G$ is called $\sigma$-\emph{soluble} if for every its chief factor $H/K$ there exists $\sigma_i\in\sigma$ such that $H/K$ is a $\sigma_i$-group (i.e. all prime divisors of $|H/K|$ belong to $\sigma_i$); $\sigma$-\emph{nilpotent}  if it has a normal Hall $\sigma_i$-subgroup for every $\sigma_i\in\sigma$. The greatest normal $\sigma$-nilpotent subgroup of $G$ is denoted by $\mathrm{F}_\sigma(G)$. The $\gamma$-length of $G$ for $\gamma=\mathrm{F}_\sigma$ is denoted by $l_\sigma(G)$. Note that a group is $\sigma$-soluble iff $l_\sigma(G)<\infty$.

\begin{cor}[\hspace{-1pt}\cite{math8122165}]\label{cor2}
   Let $\sigma$ be a partition of $\mathbb{P}$ and $M$ be a maximal subgroup of a $\sigma$-soluble group $G$. Then
  $l_\sigma(G)-l_\sigma(M)\in\{0, 1, 2\}$.
\end{cor}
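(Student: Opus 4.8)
The plan is to deduce Corollary~\ref{cor2} from Theorem~\ref{thm1} applied to the functorial $\gamma=\mathrm{F}_\sigma$, exactly as Corollary~\ref{cor1} is deduced for $\gamma=\mathrm{F}$. To do so I must check the two hypotheses of Theorem~\ref{thm1} for $\mathrm{F}_\sigma$ and then supply the lower bound. First, the finiteness conditions are automatic: by the quoted fact $l_\sigma(H)<\infty$ is equivalent to $H$ being $\sigma$-soluble, and $\sigma$-solubility is inherited by subgroups, so both $l_\sigma(G)$ and $l_\sigma(M)$ are finite.

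Next I would verify that $\mathrm{F}_\sigma$ is a hereditary Plotkin radical. The class $\mathfrak{N}_\sigma$ of $\sigma$-nilpotent groups is a Fitting formation: it is closed under homomorphic images and subdirect products, and it is closed under normal subgroups and products of normal subgroups (the product of two normal $\sigma$-nilpotent subgroups is again $\sigma$-nilpotent). As the paper notes, the $\mathfrak{F}$-radical of a Fitting formation is a hereditary Plotkin radical, so $\mathrm{F}_\sigma$ satisfies $(P1)$ and $(P2)$.

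The crux is the inclusion $\mathrm{F}^*(G)\subseteq\mathrm{F}_\sigma(G)$ for every $\sigma$-soluble $G$ (equivalently, for every $G$ with $h_{\mathrm{F}_\sigma}(G)<\infty$). Write $\mathrm{F}^*(G)=\mathrm{F}(G)\mathrm{E}(G)$, where $\mathrm{E}(G)$ is the product of the components of $G$. Since every nilpotent group is $\sigma$-nilpotent, $\mathrm{F}(G)\le \mathrm{F}_\sigma(G)$. For $\mathrm{E}(G)$, each component $L$ is a quasisimple subnormal subgroup; as $G$ is $\sigma$-soluble, the nonabelian simple section $L/Z(L)$ is a $\sigma_i$-group for some $i$, and together with the centrality of $Z(L)$ this forces $L$ to be $\sigma$-nilpotent. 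Each component is therefore a subnormal $\mathfrak{N}_\sigma$-subgroup, and since $\mathfrak{N}_\sigma$ is a Fitting class its radical $\mathrm{F}_\sigma(G)$ contains every subnormal $\sigma$-nilpotent subgroup; hence $\mathrm{E}(G)\le \mathrm{F}_\sigma(G)$ and $\mathrm{F}^*(G)\le\mathrm{F}_\sigma(G)$. This step---controlling the layer $\mathrm{E}(G)$ by showing the components of a $\sigma$-soluble group are $\sigma$-nilpotent---is where I expect the real work to lie; everything else is formal. With both hypotheses in hand, Theorem~\ref{thm1} gives $l_\sigma(G)-l_\sigma(M)\le 2$.

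Finally I would establish the lower bound $l_\sigma(M)\le l_\sigma(G)$, which yields $l_\sigma(G)-l_\sigma(M)\ge 0$. This is the $\sigma$-analogue of the remark $h(H)\le h(G)$ and follows by induction on $l_\sigma(G)$: the subgroup $\mathrm{F}_\sigma(G)\cap M$ is normal in $M$ and $\sigma$-nilpotent (being a subgroup of the $\sigma$-nilpotent group $\mathrm{F}_\sigma(G)$), so $\mathrm{F}_\sigma(G)\cap M\le\mathrm{F}_\sigma(M)$; then $M/(\mathrm{F}_\sigma(G)\cap M)$ embeds in $G/\mathrm{F}_\sigma(G)$, whose $\sigma$-nilpotent length is $l_\sigma(G)-1$, and the inductive hypothesis finishes the estimate. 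Combining the two bounds gives $l_\sigma(G)-l_\sigma(M)\in\{0,1,2\}$.
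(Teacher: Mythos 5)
Your proposal matches the paper's own (largely implicit) derivation: Corollary \ref{cor2} is obtained by applying Theorem \ref{thm1} with $\gamma=\mathrm{F}_\sigma$, and you correctly supply the details the paper leaves to the reader — that $\mathrm{F}_\sigma$ is a hereditary Plotkin radical, that $\mathrm{F}^*(G)\subseteq\mathrm{F}_\sigma(G)$ for $\sigma$-soluble $G$ (via the components being $\sigma$-nilpotent), and the monotonicity $l_\sigma(M)\leq l_\sigma(G)$ giving the lower bound. The argument is correct and takes essentially the same route as the paper.
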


According to \cite[p. 27 and Proposition 3.2.3]{Gardner2003} a hereditary Kurosh-Amitsur radical can be defined in the following way:

\begin{defn}
  A hereditary Plotkin radical $\gamma$ is called a hereditary Kurosh-Amitsur radical if it satisfies  $(P3)$: $\gamma(G/\gamma(G))\simeq1$ for every group $G$.
\end{defn}

For a class of simple groups $\mathfrak{J}$ the greatest normal subgroup $\mathrm{O}_\mathfrak{J}(G)$ of $G$ all whose composition factors belong to $\mathfrak{J}$ is the example of hereditary Kurosh-Amitsur radical. Kurosh-Amitsur radicals (of groups) were studied in \cite{Krempa2011}.

\begin{thm}\label{thm2}
  Let $\rho$   be a hereditary Kurosh-Amitsur radical which contains the soluble radical in every group and $\gamma=\rho\star\mathrm{F}^*\star\rho$.  If $M$ is a maximal subgroup of a group $G$ and $h_\gamma(G), h_\gamma(M)<\infty$, then  $h_\gamma(G)-h_\gamma(M)\leq 1$.
\end{thm}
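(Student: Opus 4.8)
The plan is to first deduce the weaker bound from Theorem~\ref{thm1} and then to remove the final unit by exploiting the two copies of $\rho$. I would begin by recording that $\gamma=\rho\star\mathrm{F}^*\star\rho$ is itself a hereditary Plotkin radical: property $(P2)$ for an upper product is the classical fact that a product of Fitting classes is a Fitting class, while $(P1)$ follows by a direct diagram chase, since an epimorphism $f\colon G\to G^*$ induces epimorphisms on the successive quotients $G/\rho(G)$ and $G/(\rho\star\mathrm{F}^*)(G)$, to which $(P1)$ for $\rho$ and for $\mathrm{F}^*$ are applied termwise. Moreover $\rho(G)\subseteq\gamma(G)$ and $\mathrm{F}^*(G)\rho(G)/\rho(G)\subseteq\mathrm{F}^*(G/\rho(G))\subseteq\gamma(G)/\rho(G)$, so $\mathrm{F}^*(G)\subseteq\gamma(G)$ for every $G$. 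Hence Theorem~\ref{thm1} already yields $h_\gamma(G)-h_\gamma(M)\leq 2$, and it remains to exclude the value $2$; I will argue by induction on $|G|$ that $h_\gamma(G)\le h_\gamma(M)+1$.

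The decisive feature of $\rho$ is an absorption lemma. Writing $\mathfrak{R}=(X\mid\rho(X)=X)$, I would show that if $N\trianglelefteq X$, $N\in\mathfrak{R}$ and $N\subseteq\rho(X)$, then $\rho(X/N)=\rho(X)/N$, so that $(X/N)/\rho(X/N)\cong X/\rho(X)$ and therefore $h_\gamma(X/N)=h_\gamma(X)$; the proof combines $(P3)$ (a group in $\mathfrak{R}$ with trivial $\rho$-radical is trivial) with the observation, read off from $\rho\star\rho=\rho$ and the associativity of $\star$, that $h_\gamma$ is an invariant of the quotient by the $\rho$-radical through the functor $\mathrm{F}^*\star\rho$. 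Since $\rho$ contains the soluble radical, every soluble normal subgroup lies in $\mathfrak{R}$, so the lemma makes all soluble (indeed all $\mathfrak{R}$-) bottom layers invisible to $h_\gamma$. Consequently, if $\rho(G)\neq 1$ one picks a minimal normal subgroup $N\subseteq\rho(G)$ (automatically $N\in\mathfrak{R}$): when $N\not\subseteq M$ we have $G=NM$, and $N\subseteq\gamma(G)$ gives $h_\gamma(G)\le h_\gamma(G/N)+1$ while $G/N\cong M/(M\cap N)$ gives $h_\gamma(G/N)\le h_\gamma(M)$; when $N\subseteq M$ the lemma yields $h_\gamma(G/N)=h_\gamma(G)$ and $h_\gamma(M/N)=h_\gamma(M)$, and one invokes the inductive hypothesis on $(G/N,M/N)$. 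In all cases $h_\gamma(G)\le h_\gamma(M)+1$, so I may assume $\rho(G)=1$, whence $\mathrm{F}^*(G)=\mathrm{Soc}(G)$ is a self-centralizing direct product of nonabelian simple groups.

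Next come the two ``escape'' reductions. If $\gamma(G)\not\subseteq M$ then $G=\gamma(G)M$ by maximality, so $G/\gamma(G)$ is an epimorphic image of $M$ and $h_\gamma(G)=1+h_\gamma(G/\gamma(G))\le 1+h_\gamma(M)$; the same computation with a minimal normal subgroup $N\not\subseteq M$ (note $N\subseteq\mathrm{F}^*(G)\subseteq\gamma(G)$) disposes of that case as well. Thus I may assume $\gamma(G)\subseteq M$, and in particular $S:=\mathrm{Soc}(G)\subseteq M$. Using $\gamma(G)/S=\rho(G/S)$ and the absorption lemma one gets $h_\gamma(G)=1+h_\gamma(G/S)$, so the whole theorem is reduced to the single inequality
\[ h_\gamma(G/S)\le h_\gamma(M),\qquad S=\mathrm{Soc}(G)=\mathrm{F}^*(G)\subseteq M,\ \rho(G)=1. \]
Here $C_G(S)=1$ forces $C_M(S)=1$, hence $\mathrm{F}(M)=1$ and $S\subseteq\mathrm{F}^*(M)$, and both $G/S$ and $M/S$ embed in $\mathrm{Out}(S)$ with $M/S$ maximal in $G/S$.

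The main obstacle is precisely this last inequality, and it is where Theorem~\ref{thm1} alone is powerless: the inductive hypothesis only gives $h_\gamma(G/S)\le h_\gamma(M/S)+1\le h_\gamma(M)+1$. Since $h_\gamma(M)\ge h_\gamma(M/S)$ always, the theorem reduces to the implication that if $h_\gamma(G/S)=h_\gamma(M/S)+1$ then $h_\gamma(M)=h_\gamma(M/S)+1$. To prove this I would analyse the action of $G$ on the set of simple factors of $S$: by Schreier's conjecture each factor has soluble outer automorphism group, so the kernel of this action lies in a bottom $\mathfrak{R}$-layer and the absorption lemma identifies $h_\gamma(G/S)$ with the $\gamma$-length of the induced permutation group on the factors, while the same factors, being the components of the normal subgroup $S$ of $M$, are visible inside $M$. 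The heart of the proof is then to match the semisimple layer that ``escapes'' in $G/S$ with the $\mathrm{F}^*$-layer that the self-centralizing socle $S$ supplies to $M$ but not to $M/S$ --- exactly the matching that fails for the generic radical of Theorem~\ref{thm1} (so that the difference $2$ is attained there) and that is rescued here by the two copies of $\rho$ absorbing the soluble discrepancy. This alignment step is the one I expect to be the most delicate.
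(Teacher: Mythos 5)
Your reductions are sound and essentially parallel the paper's: you correctly pass to $\rho(G)=1$, force $\gamma(G)\subseteq M$ (hence $S:=\mathrm{F}^*(G)\subseteq M$), and reduce everything to the inequality $h_\gamma(G/S)\leq h_\gamma(M)$. But that inequality is exactly where your proof stops being a proof: you acknowledge the "alignment step" is missing, and the strategy you sketch for it (passing to the permutation action of $G$ on the components of $S$ via Schreier's conjecture and comparing with $M$) is not the route that works and gives no concrete way to see the needed layer inside $M$. The idea you are missing is to pin down the radicals of $M$ itself. Since $\gamma(G)\trianglelefteq M$, property $(P2)$ gives $\rho(M)\cap\gamma(G)=\rho(\gamma(G))=\rho(G)\cap\gamma(G)=1$; as $\rho(M)$ and $\mathrm{F}^*(G)$ are normal in $M$ with trivial intersection, $\rho(M)\subseteq C_G(\mathrm{F}^*(G))\subseteq\mathrm{F}^*(G)$, so $\rho(M)=1$. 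Consequently $\mathrm{F}^*(M)$ is a direct product of nonabelian minimal normal subgroups of $M$, each of which must meet $\mathrm{F}^*(G)$ nontrivially and hence lie in it, giving $\mathrm{F}^*(M)=\mathrm{F}^*(G)=S$. Only with this in hand does your intended implication make sense: when $h_\gamma(M)>1$ one gets $h_\gamma(M)=h_\gamma(M/S)+1$ and minimality applied to the maximal pair $(G/S,\,M/S)$ yields $h_\gamma(G/S)\leq h_\gamma(M/S)+1=h_\gamma(M)$.

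Even then there remains a genuine exceptional case that your write-up overlooks: if $h_\gamma(M)=1$ but $M\neq S$, then $M/S=\rho(M/S)$ and $h_\gamma(M/S)=1=h_\gamma(M)$, so the identity $h_\gamma(M)=h_\gamma(M/S)+1$ fails and the induction only returns $h_\gamma(G)\leq h_\gamma(M)+2$. (Relatedly, your absorption lemma as stated is false when $N=X$, since $h_\gamma(X)=1\neq 0=h_\gamma(X/N)$ for $1\neq X\in\mathfrak{R}$; it only holds when $X/N\neq 1$.) The paper closes this case by a separate argument: $M/\mathrm{F}^*(G)\in\mathfrak{R}$ forces, via $(P2)$ for $\gamma$, that $\gamma_{(2)}(G)\not\subseteq M$ (otherwise $\gamma_{(2)}(G)=\gamma(G)$ and the $\gamma$-series of $G$ stabilizes below $G$, contradicting $h_\gamma(G)<\infty$), whence $G/\gamma_{(2)}(G)\cong M/(M\cap\gamma_{(2)}(G))$ is simultaneously a $\rho$-radical-free group and a quotient of the $\mathfrak{R}$-group $M/\gamma(G)$, hence trivial, giving $h_\gamma(G)=2$ and the final contradiction. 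Without both the $\rho(M)=1$ computation and this base case, the proof is incomplete at its decisive point.
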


Recall that for a formation $\mathfrak{F}$ and a group $G$ the $\mathfrak{F}$-residual of $G$ is denoted by $G^\mathfrak{F}$. The nilpotent and $\sigma$-nilpotent lengths of the $\mathfrak{F}$-residual are denoted by $n_\mathfrak{F}(G)$ \cite{BallesterBolinches1994} and  $n_\sigma(G, \mathfrak{F})$ \cite{math8122165} respectively.
Let $\mathfrak{F}$ be a hereditary saturated formation. In  \cite{BallesterBolinches1994} it was claimed that
$n_\mathfrak{F}(G)-n_\mathfrak{F}(M)\in\{0, 1, 2\}$ for any  soluble group $G$ and its maximal subgroup $M$. For a partition $\sigma$ of $\mathbb{P}$ in the paper \cite{math8122165} it was proved that $n_\sigma(G, \mathfrak{F})-n_\sigma(M, \mathfrak{F})\in\{0, 1, 2\}$
for any  $\sigma$-soluble group $G$ and its maximal subgroup $M$. Our next result shows that the two above mentioned facts are \textbf{\emph{wrong}}.

\begin{thm}\label{thm3}
  Let $\sigma$ be a partition of $\mathbb{P}$ with $|\sigma|>1$. Then there exists a hereditary saturated formation $\mathfrak{F}=\mathfrak{F}(\sigma)$ of soluble groups such that
  \begin{center}
  $\{n_\sigma(G, \mathfrak{F})-n_\sigma(M, \mathfrak{F})\mid G$ is $\sigma$-soluble and $M$ is a maximal subgroup of $G\}=\mathbb{N}\cup\{0\}.$
  \end{center}
In particular, there exists  a hereditary saturated formation $\mathfrak{F}$ such that
\begin{center}
$\{n_\mathfrak{F}(G)-n_\mathfrak{F}(M)\mid G$ is soluble and $M$ is a maximal subgroup of $G\}=\mathbb{N}\cup\{0\}.$
\end{center}
\end{thm}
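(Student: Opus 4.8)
The plan is to realise every non-negative integer by a single construction depending only on $\sigma$, using two primes $p\in\sigma_1$, $q\in\sigma_2$ (with $p^2\mid q-1$). I first record two reductions. Because $\mathfrak{F}$ will be hereditary, for $M\leq G$ one has $M^\mathfrak{F}\leq G^\mathfrak{F}$ (since $M/(G^\mathfrak{F}\cap M)\cong MG^\mathfrak{F}/G^\mathfrak{F}\in\mathfrak{F}$), and as the $\sigma$-nilpotent length is monotone on subgroups of $\sigma$-soluble groups this gives $n_\sigma(M,\mathfrak{F})\leq n_\sigma(G,\mathfrak{F})$; hence the difference set lies in $\mathbb{N}\cup\{0\}$ and contains $0$ (take $G\in\mathfrak{F}$). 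So it suffices to realise each $n\geq 1$. Secondly, if $M$ is core-free then either $G\in\mathfrak{F}$, or $G$ is primitive with unique minimal normal subgroup $V\leq G^\mathfrak{F}$ and $G^\mathfrak{F}/V\cong (G/V)^\mathfrak{F}\cong M^\mathfrak{F}$, whence $n_\sigma(G,\mathfrak{F})\leq n_\sigma(M,\mathfrak{F})+1$. Thus any difference $\geq 2$ must come from a maximal subgroup $M$ with $\mathrm{core}_G(M)=K\neq 1$, and the whole $\sigma$-height must be concentrated inside $K$.

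The choice of $\mathfrak{F}$ is the crux, since the naive candidates fail. For the $\sigma$-nilpotent groups, or the groups of $\sigma$-nilpotent length $\leq k$, the residual merely deletes a fixed number of $\mathrm{F}_\sigma$-layers, so $n_\sigma(G,\mathfrak{F})-n_\sigma(M,\mathfrak{F})=l_\sigma(G)-l_\sigma(M)\in\{0,1,2\}$ by Corollary~\ref{cor2}; the class of soluble $\sigma_1$-groups likewise keeps the difference $\leq 2$. What is needed is a \emph{prime-asymmetric} saturated formation that is \emph{not} closed under the relevant extensions. I would define $\mathfrak{F}=\mathrm{LF}(f)$ by a local formation function $f$ for which the cyclic automizers $C_p,C_q$ are admissible ($C_p\in f(q)$, $C_q\in f(p)$) but $C_{p^2}\notin f(q)$; concretely one may take $f(q)=\mathfrak{S}_q\mathfrak{X}_p$, $f(p)=\mathfrak{S}_p\mathfrak{X}_q$ with $\mathfrak{X}_r$ the formation of elementary abelian $r$-groups, and adjust $f$ at the other primes so that, by the Doerk--Hawkes subgroup-closure criterion, $\mathfrak{F}$ is a hereditary saturated formation of soluble groups. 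Then $\mathfrak{F}$ contains arbitrarily tall alternating towers built from $C_p$- and $C_q$-actions, yet rejects a single $C_{p^2}$-action on a $q$-chief factor.

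For each $n$ I would build $G_n$ thus. Let $K_n$ be an alternating $\{p,q\}$-tower of $\sigma$-nilpotent length $n$ whose top $\mathrm{F}_\sigma$-factor is a $q$-group and all of whose chief-factor automizers are $C_p$ or $C_q$, so that $K_n\in\mathfrak{F}$. Glue on top of its $q$-top a further $q$-chief factor $V_n/K_n$ by a non-split (uniserial) module extension, and let $H=C_{p^2}$ act on $V_n$ centralising $K_n$ and faithfully irreducibly on $V_n/K_n$. Put $G_n=V_n\rtimes H$ and $M_n=K_n\times H$. Then $G_n/K_n=(V_n/K_n)\rtimes H$ is primitive with point stabiliser $M_n/K_n$, so $M_n$ is maximal with $\mathrm{core}_{G_n}(M_n)=K_n$; being a direct product, $M_n$ has the same chief-factor automizers as $K_n$ together with the trivial automizer of $H$, all admissible, so $M_n\in\mathfrak{F}$ and $n_\sigma(M_n,\mathfrak{F})=0$. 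On the other hand the automizer of $V_n/K_n$ in $G_n$ is $G_n/C_{G_n}(V_n/K_n)=H=C_{p^2}\notin f(q)$, so $G_n\notin\mathfrak{F}$; as $G_n/V_n\cong H\in\mathfrak{F}$ we get $G_n^\mathfrak{F}\leq V_n$ and $G_n^\mathfrak{F}K_n=V_n$. Arranging that $V_n$ carries a \emph{unique} $G_n$-chief series (its $G_n$-normal subgroups forming a chain) forces the only normal supplement of $K_n$ in $V_n$ to be $V_n$, so $G_n^\mathfrak{F}=V_n$; since $V_n/K_n$ and the $q$-top of $K_n$ both lie in $\sigma_2$, $l_\sigma(V_n)=l_\sigma(K_n)=n$, giving $n_\sigma(G_n,\mathfrak{F})=n$ and difference $n$. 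Specialising $\sigma$ to the partition into singletons turns $n_\sigma(\cdot,\mathfrak{F})$ into $n_\mathfrak{F}$ and yields the final assertion.

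The main obstacle is the module-theoretic engineering in the third step: for every $n$ one must exhibit $V_n$ and the $H$-action so that (i) $V_n$ has a unique $G_n$-composition series, whence $G_n^\mathfrak{F}$ cannot be a proper supplement of $K_n$ and the whole tall core is absorbed into the residual, while (ii) every automizer inside $K_n$ and $M_n$ stays cyclic of order $p$ or $q$ but the top automizer is the forbidden $C_{p^2}$. This means realising the requisite faithful indecomposable actions and a non-vanishing $\mathrm{Ext}^1$ for the top gluing, and checking that the fixed $f$ indeed yields a hereditary saturated formation serving all $n$ simultaneously. The two reductions above pinpoint precisely the features a correct proof of the published bound would have to rule out, which is exactly why that bound fails.
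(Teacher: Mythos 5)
Your two reductions are correct, and your overall strategy --- concentrate the whole $\sigma$-height inside $\mathrm{core}_G(M)$, arrange $M\in\mathfrak{F}$ and force $G^{\mathfrak{F}}$ to swallow the core --- is exactly the mechanism behind the paper's example. But the construction you postpone to your last paragraph is not routine engineering; it is the entire proof, and in the form you describe it cannot be carried out. Write $L$ for the penultimate term of the $\mathrm{F}_\sigma$-series of $K_n$, so that $K_n/L$ is the top $q$-layer and $P=V_n/L$ is a normal $q$-subgroup of $G_n/L$ on which $H\cong C_{p^2}$ acts coprimely, centralizing $K_n/L$ and acting without nonzero fixed points on $P/(K_n/L)\cong V_n/K_n$ (a faithful irreducible module for a cyclic group has trivial fixed points). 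Coprime action gives $P=[P,H]\,C_P(H)$ with $C_P(H)=K_n/L$; if the gluing is a module extension as you propose ($P$ abelian), then $P=[P,H]\times (K_n/L)$, and the preimage $N$ of $[P,H]$ is a proper $G_n$-normal supplement to $K_n$ in $V_n$ with $N\cap K_n=L$. So $V_n$ does \emph{not} have a unique $G_n$-chief series: the non-vanishing $\mathrm{Ext}^1$ you need is zero by Maschke, the argument forcing $G_n^{\mathfrak{F}}=V_n$ collapses, and $G_n^{\mathfrak{F}}$ may drop into $N$, whose $\sigma$-length your bookkeeping does not control. Trying to escape via a non-abelian $P$ (say $K_n/L=Z(P)=[P,P]$) makes $K_n/L$ central in $G_n$ and changes both the automizers and the $\sigma$-length count you rely on. In addition, the existence of a local function $f$ with all the stated properties is only asserted, not verified.

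The paper reaches the conclusion with far less machinery and with a formation simpler than anything on your list of rejected candidates: $\mathfrak{F}$ is the class of all $p$-closed soluble groups, for a prime $p$ chosen so that at least two $\sigma$-classes meet $\mathbb{P}\setminus\{p\}$. Starting from $G_1=C_p$ one iterates $G_{i+1}=V_i\rtimes G_i$ with $V_i$ a faithful irreducible $\mathbb{F}_{p_i}G_i$-module, where $p_1,\dots,p_n$ are all different from $p$ and consecutive primes lie in different $\sigma$-classes; then $M_n=V_nV_{n-1}\cdots V_1$ is a (normal) maximal subgroup of $G=G_{n+1}$ of index $p$. The key point is that $M_n$ is a $p'$-group and therefore lies in $\mathfrak{F}$ \emph{vacuously}, so $M_n^{\mathfrak{F}}=1$; meanwhile the chief series of $G$ is a chain and $G/(V_n\cdots V_2)\cong G_2$ is not $p$-closed, so $G^{\mathfrak{F}}=M_n$, and $\mathrm{F}_\sigma(M_i)=V_i$ gives $l_\sigma(M_n)=n$. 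No $\mathrm{Ext}$ computations or uniserial gluings are needed, because $M_n\in\mathfrak{F}$ for a global reason rather than through chief-factor-by-chief-factor admissibility. To salvage your line you would have to replace the coprime top action by one that genuinely admits no normal complement, or, as the paper does, choose $\mathfrak{F}$ so that the maximal subgroup belongs to it wholesale.
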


\section{Preliminaries}

All unexplained notations and terminologies are standard. The reader is referred to \cite{Doerk1992} if necessary. Recall that $\mathbb{N} $ and $ \mathbb{P}$ denote the sets of all natural and prime numbers respectively.

 Recall that a \emph{class of groups} is a collection $\mathfrak{F}$ of groups with the property that
if $G\in\mathfrak{F} $ and if $H \simeq  G$, then $H \in \mathfrak{F}$; a \emph{formation} is a class of groups $\mathfrak{F}$ which is   closed  under taking epimorphic images (i.e. from $G\in\mathfrak{F}$ and $N\trianglelefteq G$ it follows that $G/N\in\mathfrak{F}$)  and subdirect products (i.e. from $G/N_1\in\mathfrak{F}$ and $G/N_2\in\mathfrak{F}$ it follows that $G/(N_1\cap N_2)\in\mathfrak{F}$). A formation $\mathfrak{F}$ is called: hereditary if $H\in\mathfrak{F}$ whenever $H\leq G\in\mathfrak{F}$; saturated if $G\in\mathfrak{F}$ whenever $G/\Phi(G)\in\mathfrak{F}$. The smallest normal subgroup of $G$ with quotient in $\mathfrak{F}$ is called the $\mathfrak{F}$-residual of $G$. A group $G$ is called $p$-closed if it has a normal Sylow $p$-subgroup. The class of all soluble $p$-closed groups is the example of a hereditary saturated formation.

From \cite[Proposition 2.3 and Lemma 2.6]{Murashka2023}  the next result follows.

\begin{lem}\label{lem1}
  If $\gamma$ is a hereditary Plotkin radical, then $\gamma_{(n)}$ is a hereditary Plotkin radical for any $n\in\mathbb{N}$ and
  $\max\{h_\gamma(N), h_\gamma(G/N)\}\leq h_\gamma(G)\leq h_\gamma(G/N)+h_\gamma(N)$ for any $N\trianglelefteq G$.
\end{lem}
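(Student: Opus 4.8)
The plan is to treat the two assertions separately, reducing both to a single closure property of the upper product. Because $\gamma_{(0)}=\mathbf{1}$ is a two-sided identity for $\star$ and $\star$ is associative, one has $\gamma_{(n)}=\gamma^{\star n}$, and more generally $\gamma_{(a)}\star\gamma_{(b)}=\gamma_{(a+b)}$ for all $a,b$. Thus everything follows, by induction on $n$, from the claim that if $\alpha,\beta$ are hereditary Plotkin radicals then so is $\delta:=\alpha\star\beta$, where $\delta(G)/\alpha(G)=\beta(G/\alpha(G))$. That $\delta$ is a functorial is immediate from the isomorphism-invariance of $\alpha$ and $\beta$, so it remains to verify (P1) and (P2).

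For (P2), fix $N\trianglelefteq G$ and let $p\colon G\to\bar G:=G/\alpha(G)$. Since $\alpha(G)\cap N=\alpha(N)$ by (P2) for $\alpha$, the image $p(N)=N\alpha(G)/\alpha(G)$ is isomorphic to $N/\alpha(N)$, and under this isomorphism $\beta(p(N))$ corresponds to $\delta(N)/\alpha(N)$. As $\ker p=\alpha(G)\subseteq\delta(G)$, the modular law gives $p(\delta(G)\cap N)=\beta(\bar G)\cap p(N)=\beta(p(N))$, the last step being (P2) for $\beta$ inside $\bar G$; since $\delta(G)\cap N$ and $\delta(N)$ have the same $p$-image and both contain $\ker(p|_N)=\alpha(N)$, they coincide. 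For (P1) it suffices to prove $\delta(G)N/N\subseteq\delta(G/N)$. The key device is the natural epimorphism $r\colon G/\alpha(G)\to(G/N)/\alpha(G/N)$, which exists because (P1) for $\alpha$ yields $\alpha(G)N/N\subseteq\alpha(G/N)$; applying (P1) for $\beta$ to $r$ sends $\delta(G)/\alpha(G)=\beta(G/\alpha(G))$ into $\beta((G/N)/\alpha(G/N))=\delta(G/N)/\alpha(G/N)$, and unwinding this through the factorisation $G\to G/N\to(G/N)/\alpha(G/N)$ gives exactly the desired inclusion.

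For the length inequalities I would use that every $\gamma_{(i)}$ is now known to be a hereditary Plotkin radical. With $h=h_\gamma(G)$, property (P2) gives $\gamma_{(h)}(N)=\gamma_{(h)}(G)\cap N=N$ and (P1) gives $\gamma_{(h)}(G/N)\supseteq\gamma_{(h)}(G)N/N=G/N$, whence $\max\{h_\gamma(N),h_\gamma(G/N)\}\leq h_\gamma(G)$; in particular $h_\gamma$ does not increase under epimorphic images. For the upper bound, put $a=h_\gamma(N)$ and $b=h_\gamma(G/N)$. Then $\gamma_{(a)}(N)=N$ forces $N\subseteq\gamma_{(a)}(G)$ by (P2), so $G/\gamma_{(a)}(G)$ is an epimorphic image of $G/N$ and hence has $\gamma$-length at most $b$. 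Since $\gamma_{(a+b)}(G)/\gamma_{(a)}(G)=\gamma_{(b)}(G/\gamma_{(a)}(G))=G/\gamma_{(a)}(G)$, we conclude $\gamma_{(a+b)}(G)=G$, i.e. $h_\gamma(G)\leq h_\gamma(N)+h_\gamma(G/N)$.

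The main obstacle is the bookkeeping in the proof of (P1) and (P2) for $\delta$: one must carefully justify the natural epimorphism $G/\alpha(G)\to(G/N)/\alpha(G/N)$ and track the isomorphism-invariance of $\beta$ when identifying $N\alpha(G)/\alpha(G)$ with $N/\alpha(N)$; once these identifications are pinned down, the remainder is a routine chase through the isomorphism theorems. A higher-level alternative is to invoke the Baer correspondence recalled after Corollary~\ref{cor}, which reduces the whole lemma to the standard fact that the Fitting product of $Q$-closed Fitting classes is again a $Q$-closed Fitting class.
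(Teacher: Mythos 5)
Your proof is correct, but there is nothing in the paper to compare it against step by step: the paper gives no internal proof of Lemma \ref{lem1}, deriving it instead by citing Proposition 2.3 and Lemma 2.6 of \cite{Murashka2023}. Your argument is in effect a self-contained reconstruction of exactly those two cited results: your key claim, that the upper product $\alpha\star\beta$ of two hereditary Plotkin radicals is again a hereditary Plotkin radical, is precisely what the paper elsewhere invokes Proposition 2.3 for (e.g.\ at the start of the proof of Theorem \ref{thm2}, for $\gamma=\rho\star\mathrm{F}^*\star\rho$), while the monotonicity and subadditivity of $h_\gamma$ correspond to the cited Lemma 2.6. The individual steps check out: the Dedekind-law computation giving $(P2)$ for $\alpha\star\beta$, the induced epimorphism $G/\alpha(G)\to(G/N)/\alpha(G/N)$ giving $(P1)$, and the two length inequalities obtained by applying $(P1)$/$(P2)$ to the radicals $\gamma_{(n)}$ together with $\gamma_{(a)}\star\gamma_{(b)}=\gamma_{(a+b)}$. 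You do rely tacitly on a few harmless facts worth making explicit: that $\gamma(G)$ is characteristic (hence normal) in $G$ --- needed so that $N\alpha(G)/\alpha(G)$ is normal in $G/\alpha(G)$ before applying $(P2)$ for $\beta$ --- which follows from functoriality applied to automorphisms; that the inequalities are vacuous when a length is infinite; and that $\gamma_{(i)}(H)=H$ forces $\gamma_{(j)}(H)=H$ for all $j\geq i$ (clear since $\gamma(1)=1$). One small correction: the Baer correspondence you offer as a higher-level alternative is recalled \emph{before} Corollary \ref{cor}, not after it, and that route would still require knowing that the radical attached to a Fitting product of two $Q$-closed Fitting classes is the upper product of their radicals and that $Q$-closure is inherited by Fitting products; your direct argument is the cleaner one and is the one to keep.
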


One of the characteristic properties of Kurosh-Amitsur radicals is the following

\begin{lem}\label{lem2}
  Let $\gamma$ be a hereditary Kurosh-Amitsur radical. Then $\gamma(G/N)=\gamma(G)/N$ for any $N\trianglelefteq G$ with $N\subseteq\gamma(G)$.
\end{lem}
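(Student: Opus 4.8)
The plan is to prove the two inclusions $\gamma(G)/N\subseteq\gamma(G/N)$ and $\gamma(G/N)\subseteq\gamma(G)/N$ separately; the first uses only $(P1)$, while the second combines $(P1)$ with $(P3)$.

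First I would take the canonical epimorphism $f:G\to G/N$. Since $N\subseteq\gamma(G)$, the image $f(\gamma(G))$ is exactly $\gamma(G)/N$. Applying $(P1)$ to $f$ then gives $\gamma(G)/N=f(\gamma(G))\subseteq\gamma(f(G))=\gamma(G/N)$, which is the first inclusion.

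For the reverse inclusion I would pass to the quotient of $G/N$ by $\gamma(G)/N$. Writing $K=\gamma(G)$, the third isomorphism theorem supplies an isomorphism $\phi:(G/N)/(K/N)\to G/K=G/\gamma(G)$. Because $\gamma$ is a functorial it respects $\phi$, and $(P3)$ gives $\gamma(G/\gamma(G))=1$; hence $\gamma\big((G/N)/(K/N)\big)=1$. Now let $g:G/N\to(G/N)/(K/N)$ be the canonical epimorphism, whose kernel is $K/N$. Applying $(P1)$ to $g$ yields $g(\gamma(G/N))\subseteq\gamma\big((G/N)/(K/N)\big)=1$, so $\gamma(G/N)\subseteq\ker g=K/N=\gamma(G)/N$. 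Together with the first inclusion this gives the desired equality.

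The whole argument is a formal consequence of the radical axioms, so I do not anticipate a serious obstacle. The only points needing care are to record that $f(\gamma(G))=\gamma(G)/N$ genuinely holds because $N\subseteq\gamma(G)$ (otherwise one would only obtain $\gamma(G)N/N$), and to use the functorial (isomorphism-invariance) property to transport the vanishing of $\gamma$ across $\phi$ before invoking $(P3)$.
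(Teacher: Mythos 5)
Your proof is correct and follows essentially the same route as the paper: both inclusions come from pushing $\gamma$ along canonical epimorphisms and invoking $(P3)$ for the quotient $G/\gamma(G)$. The paper phrases the reverse inclusion as a proof by contradiction, using $(P2)$ to compare $\gamma(H/\gamma(G))$ with $\gamma(G/\gamma(G))$ where $H/N=\gamma(G/N)$, whereas your direct application of $(P1)$ to the epimorphism $G/N\to(G/N)/(\gamma(G)/N)$ is slightly cleaner and avoids $(P2)$ altogether.
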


\begin{proof}
  Assume that $\gamma(G)/N<\gamma(G/N)=H/N$. Then $1\not\simeq H/\gamma(G)=\gamma(H/\gamma(G))\subseteq\gamma(G/\gamma(G))\simeq1$, a contradiction.
\end{proof}

\section{Proves of the Main Results}

\subsection{Proof of Theorem \ref{thm1}}


Assume the contrary. Let a group $G$ be a minimal order counterexample. Hence $G$ has  a maximal subgroup $M$ with $h_\gamma(G)-h_\gamma(M)>2$. It is clear that $h_\gamma(G)\geq 3$.
Let $M_i=\gamma_{(i)}(M)$ and $G_i=\gamma_{(i)}(G)$. If $MG_1=G$, then $h_\gamma(G)-1=h_\gamma(G/G_{1})=h_\gamma(MG_1/G_{1})=h_\gamma(M/(M\cap G_{1}))\leq h_\gamma(M)$ by Lemma \ref{lem1}. It means that $h_\gamma(G)-h_\gamma(M)\leq 1$, a contradiction. Therefore $G_1\subseteq M$.

If $M_0=M$, then $G$ is a cyclic group of prime order and $h_\gamma(G)-h_\gamma(M)=1$, a contradiction. So $M_0\neq M$.
 Suppose that $ M_i\subseteq G_{i+1}\subseteq M$  and $M_i\neq M$ for some $ i\geq0$. At least it is true for $i=0$. Let prove that   $ M_{i+1}\subseteq G_{i+2}\subseteq M$ and $M_{i+1}\neq M$.

Note that $h_\gamma(G)>i+1$ and $h_\gamma(M)>i$. From $  M_i\subseteq G_{i+1}\subseteq G_{i+2}$ it follows that $M_i\subseteq M\cap G_{i+2}$.
If $G_{i+2}\not\leq M$, then by Definition \ref{hun} and Lemma \ref{lem1}
 $$h_\gamma(G)-(i+2)=h_\gamma(G/G_{i+2}) =h_\gamma(MG_{i+2}/G_{i+2})=h_\gamma(M/(M\cap G_{i+2}))\leq h_\gamma(M)-i.$$
 Therefore $h_\gamma(G)-h_\gamma(M)\leq (i+2)-i=2$, a contradiction.  Thus $G_{i+2}\subseteq M$.

 Now $G_{i+2}, M_{i+1}\trianglelefteq M$. Let $I=G_{i+2}\cap M_{i+1}\trianglelefteq M$. From $I\trianglelefteq M_{i+1}$ it follows that $\gamma_{(i+1)}(I)=I$ by Lemma \ref{lem1}.  From the other hand  $I\trianglelefteq G_{i+2}$ and $G_{i+1}=\gamma_{(i+1)}(G)=\gamma_{(i+1)}(G)\cap G_{i+2}=\gamma_{(i+1)}(G_{i+2})$ by $(P2)$ and Lemma \ref{lem1}. Thus  $I\leq G_{i+1}$ by $(P2)$.

 Let $F/G_{i+1}=\mathrm{F}^*(G/G_{i+1})$.  From $h_\gamma(G)<\infty$ it follows that $h_\gamma(G/G_{i+1})<\infty$. Therefore $F/G_{i+1}\subseteq \gamma(G/G_{i+1})=G_{i+2}/G_{i+1}$. Hence $F\leq G_{i+2}$.
 Now $$(M_{i+1}G_{i+1}/G_{i+1})\cap G_{i+2}/G_{i+1}=(M_{i+1} \cap G_{i+2})G_{i+1}/G_{i+1}=G_{i+1}/G_{i+1}\simeq 1.$$
 From \cite[X, Theorem 13.12]{Huppert1982} it follows that
 $$M_{i+1}G_{i+1}/G_{i+1}\subseteq C_{G/G_{i+1}}(G_{i+2}/G_{i+1})\subseteq C_{G/G_{i+1}}(F/G_{i+1})\subseteq F/G_{i+1}\subseteq G_{i+2}/G_{i+1}.$$
 Thus $M_{i+1}\subseteq  G_{i+2}$.
 If $M_{i+1}=M$, then $G_{i+2}=M<G$. By our assumption $M_i\neq M$. Hence $h_\gamma(M)=i+1$ and $h_\gamma(G)= i+3$. Therefore $h_\gamma(G)-h_\gamma(M)= 2$, a contradiction. Thus $M_{i+1}\neq M$.

 It means $M_{i}\subseteq  G_{i+1}\subseteq M$ and $M_i\neq M$ for every natural $i$.  Thus $h_\gamma(G)=\infty$, the contradiction.

\subsection{Proof of Corollary \ref{cor}}

Since $\mathfrak{F}$ is a $Q$-closed Fitting class of soluble groups, we see that $\gamma$ is a hereditary Plotkin radical by \cite[Theorem 3.1 and Corollary 3.4(A)]{Baer1966} and $\mathfrak{F}$ contains a group of order $p$ for any $p\in\pi$. It means that $h_\gamma(G)<\infty$ iff $G$  is a  soluble $\pi$-group and $\mathfrak{F}$ contains all nilpotent $\pi$-groups by \cite[IX, Theorem 1.9]{Doerk1992}. So $\mathrm{F}^*(G)=\mathrm{F}(G)\subseteq\gamma(G)$ for any group $G$ with $h_\gamma(G)<\infty$. Thus Corollary \ref{cor} directly follows from Theorem \ref{thm1}.

\subsection{Proof of Theorem \ref{thm2}}

Note that $\gamma$ is a hereditary Plotkin radical by \cite[Proposition 2.3]{Murashka2023}.

Assume the contrary. Let a group $G$ be a minimal order counterexample. Hence $G$ has  a maximal subgroup $M$ with $h_\gamma(G)-h_\gamma(M)>1$. It is clear that $h_\gamma(G)>1$.

If $M\gamma(G)=G$, then by Definition \ref{hun} and Lemma \ref{lem1}
$$h_\gamma(G)-1=h_\gamma(G/\gamma(G))=h_\gamma(M/(M\cap\gamma(G)))\leq h_\gamma(M).$$
Therefore $h_\gamma(G)-h_\gamma(M)\leq 1$, a contradiction. Hence $\gamma(G)\subseteq M$. Since $\rho$ satisfies $(P2)$, we see that $\rho(G)\subseteq\rho(M)$. Note that
$(M/\rho(G))/(\rho(M/\rho(G)))=(M/\rho(G))/(\rho(M)/\rho(G))\simeq M/\rho(M)$ and $\rho(G/\rho(G))\simeq 1$ by Lemma \ref{lem2} and $(P3)$.
From the definition of $\gamma$ it follows that  $\gamma(G)/\rho(G)=\gamma(G/\rho(G))$ and $\gamma(M/\rho(G))=\gamma(M)/\rho(G)$.
If $\rho(G)=M$, then $h_\gamma(G)-h_\gamma(M)=1-1=0$, a contradiction. Hence $h_\gamma(G/\rho(G))=h_\gamma(G)$ and
$h_\gamma(M/\rho(G))=h_\gamma(M)$. From our assumption it follows that $\rho(G)=1$.  So $\rho(\gamma(G))=1$. From $\gamma(G), \rho(M)\trianglelefteq M$ it follows that $\rho(M)\cap\gamma(G)\trianglelefteq \gamma(G)$. Hence $\rho(M)\cap\gamma(G)=\rho(\gamma(G))=1$. Now from \cite[X, Theorem 13.12]{Huppert1982} it follows that $$\rho(M)\subseteq C_G(\gamma(G))\subseteq C_G(\mathrm{F}^*(G))\subseteq\mathrm{F}^*(G)\subseteq\gamma(G).$$
It means that $\rho(M)=1$. Now $M_\mathfrak{S}=1$. Therefore   $\mathrm{F}^*(M)$ is the direct products of minimal normal non-abelian subgroups of $M$ by \cite[X, Definition 13.14 and Lemma 13.16]{Huppert1982}. Let $M_1$ be one of them. If $M_1\not\subseteq\mathrm{F}^*(G)$, then $M_1\cap \mathrm{F}^*(G)=1$. So $M_1\subseteq C_G(\mathrm{F}^*(G))\subseteq \mathrm{F}^*(G)$, a contradiction. Hence $\mathrm{F}^*(M)\subseteq\mathrm{F}^*(G)\subseteq \gamma(G)\subseteq M$. Thus $\mathrm{F}^*(M)=\mathrm{F}^*(G)$.

Since $\rho$ is a Kurosh-Amitsur radical and   $h_\gamma(G)>1$, we see that  $h_\gamma(G/\mathrm{F}^*(G))=h_\gamma(G)-1$. If $h_\gamma(M)>1$, then
$h_\gamma(M/\mathrm{F}^*(G))=h_\gamma(M/\mathrm{F}^*(M))=h_\gamma(M)-1$ and we get the contradiction with the initial assumption. Thus $h_\gamma(M)=1$.  It means that $M/\mathrm{F}^*(G)=\rho(M/\mathrm{F}^*(G))$. Therefore $\gamma_2(G)\not\subseteq M$.  Now $G/\gamma_{(2)}(G)=M/(M\cap\gamma_{(2)}(G))$.
So $1\simeq \rho(G/\gamma_{(2)}(G))=\rho(M/(M\cap\gamma_{(2)}(G)))$ by Lemma \ref{lem2} and definition of $\gamma$. From  $\gamma(G)\subseteq\gamma_{(2)}(G)\cap M$ and $M/\gamma(G)=\rho(M/\gamma(G))$ it follows that $M/(M\cap\gamma_{(2)}(G))=\rho(M/(M\cap\gamma_{(2)}(G)))\simeq 1$. Thus $h_\gamma(G)=2$ and $h_\gamma(M)=1$, the final contradiction.

\subsection{Proof of Theorem \ref{thm0}}

If $\gamma=\mathrm{F}^*$ then from Theorem \ref{thm1} it follows that $h^*(G)-h^*(M)\leq 2$ for any group $G$ and its maximal subgroup $M$.

Assume that $\rho$ is the $p$-soluble radical and $\gamma=\rho\star\mathrm{F}^*\star\rho$. Then $\gamma$ satisfies the assumptions of Theorem \ref{thm2}. Hence if $H$ is not a $p$-soluble group, then $h_\gamma(H)=\lambda_p(H)$ by \cite[Lemma 2.7]{Murashka2023}. Let a group $G$ be a minimal order group with a maximal subgroup $M$ such that $\lambda_p(G)-\lambda_p(M)>1$. It means that $G$ is a non-$p$-soluble group, $\lambda_p(G)>1$ and $M$ is $p$-soluble.  If $M\gamma(G)=G$, then from $\lambda_p(G/\gamma(G))\geq 1$ and $G/\gamma(G)\simeq M/(M\cap \gamma(G))$ it follows that a $p$-soluble group $M$ has a non-$p$-soluble composition factor, a contradiction. Thus $\gamma(G)\trianglelefteq M$. Hence  a $p$-soluble group $M$ has a non-$p$-soluble composition factor, the final contradiction. It means that  $\lambda_p(G)-\lambda_p(M)\leq 1$  and $\lambda(G)-\lambda(M)\leq 1$   ($\lambda=\lambda_2$) for any group $G$ and its maximal subgroup $M$.

\subsection{Proof of Theorem \ref{thm3}}
From $|\sigma|>1$ it follows that there exists $p\in\mathbb{P}$ such that $|\sigma\cap(\mathbb{P}\setminus\{p\})|>1$. Let $\mathfrak{F}$ be a formation of all $p$-closed soluble groups. Then $\mathfrak{F}$ is a hereditary saturated formation. Note that $n_\sigma(G, \mathfrak{F})-n_\sigma(M, \mathfrak{F})=0$ for every soluble $p$-closed group $G$ and its maximal subgroup $M$.

 For every $n>0$ there exists a sequence of not necessary different  primes $p=p_0, p_1, p_2,\dots, p_n$ such that every two of its consecutive elements belong to different elements of $\sigma$ and $p_i\neq p$ for all $i>0$. Let $G_1$ be a cyclic group of order $p$. Define a sequence of subgroups $G_i$ inductively.
Note that for $G_i$ there exists  a faithful irreducible module $V_i$ over $\mathbb{F}_{p_i}$ \cite[B, Theorem 10.3]{Doerk1992}. Let $G_{i+1}$ be the semidirect product of $V_i$ with $G_i$ corresponding to the action
of $G_i$ on $V_i$ as an $\mathbb{F}_pG_i$-module. Since $p_i$ and $p_{i-1}$ belong to different elements of $\sigma$ and $V_i$ is the unique minimal normal subgroup of $G_{i+1}$, we see that $\mathrm{F}_\sigma(G_{i+1})=V_i$.

Let $G=G_{n+1}$ and $M_i=V_iV_{i-1}\dots V_1$. Then $M_n$ is a maximal subgroup of $G$ and a $p'$-group. Hence $l_\sigma(M_n^\mathfrak{F})=l_\sigma(1)=0$. Note that $G$ has the unique chief series and $G_2\simeq G/(V_2V_{3}\dots V_n)$ is not $p$-closed. It means that $G^\mathfrak{F}=M_n$. Note that $M_i\trianglelefteq G_{i+1}$. Now $\mathrm{F}_\sigma(M_i)=\mathrm{F}_\sigma(G_{i+1})\cap M_i=V_i\cap M_i=V_i$. It means that $l_\sigma(M_n)=n$.
Therefore $n_\sigma(G, \mathfrak{F})-n_\sigma(M, \mathfrak{F})=n$. Since every soluble group is $\sigma$-soluble, we see that
  \begin{center}
  $\{n_\sigma(G, \mathfrak{F})-n_\sigma(M, \mathfrak{F})\mid G$ is $\sigma$-soluble and $M$ is a maximal subgroup of $G\}=\mathbb{N}\cup\{0\}.$
  \end{center}
In particular if $|\sigma_i|=1$ for every $\sigma_i\in\sigma$, then
\begin{center}
  $\{n_\mathfrak{F}(G)-n_\mathfrak{F}(M)\mid G$ is soluble and $M$ is a maximal subgroup of $G\}=\mathbb{N}\cup\{0\}.$
\end{center}

\section{Final Remarks and Open Questions}

Note that $h(H^\mathfrak{N})=h(H)-1$ for any non-unit soluble group  $H$ and $h(H^\mathfrak{N})=h(H)$ for a unit group $H$. If a unit group  is a maximal subgroup $M$ of $G$, then $G$ is cyclic and $n_\mathfrak{F}(G)-n_\mathfrak{F}(M)=0$. If $M$ is a non-unit  subgroup  of a soluble group $G$, then $h(M^\mathfrak{N})=h(M)-1$ and $h(G^\mathfrak{N})=h(G)-1$. Hence $n_\mathfrak{N}(G)-n_\mathfrak{N}(M)\in\{0, 1, 2\}$ for any soluble group $G$ and its maximal subgroup $M$ by Corollary \ref{cor1}.  That is why the main result of \cite{BallesterBolinches1994} is wrong not for all hereditary saturated formations. Therefore the following question seems natural:

\begin{quest}\label{ques1}
  Describe all hereditary saturated formations $\mathfrak{F}$ such that $n_\mathfrak{F}(G)-n_\mathfrak{F}(M)\in\{0, 1, 2\}$ for any soluble group $G$ and its maximal subgroup $M$.
\end{quest}

\begin{prop}\label{prop1}
  Let  $\mathfrak{F}$ be a hereditary saturated formation containing all nilpotent groups. Assume that there exists a constant $n$ such that $h(G)\leq n$ for any soluble $\mathfrak{F}$-group $G$. Then $n_\mathfrak{F}(G)-n_\mathfrak{F}(M)\leq n+1$ for any soluble group $G$ and its maximal subgroup $M$.
\end{prop}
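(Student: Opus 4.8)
The plan is to read off $n_\mathfrak{F}(\cdot)=h(\cdot^\mathfrak{F})$ and to squeeze this quantity between the ordinary nilpotent lengths of the ambient groups, where Doerk's inequality (Corollary~\ref{cor1}) already controls the gap $h(G)-h(M)\le 2$. Concretely, I would establish two estimates: an upper bound $n_\mathfrak{F}(G)\le h(G)-1$ and a lower bound $n_\mathfrak{F}(M)\ge h(M)-n$. Subtracting and invoking $h(G)-h(M)\le 2$ then yields
$$n_\mathfrak{F}(G)-n_\mathfrak{F}(M)\le (h(G)-1)-(h(M)-n)=(h(G)-h(M))+n-1\le n+1,$$
which is exactly the claim. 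Throughout I may assume $G\neq 1$, since $M$ is a proper subgroup, so that $h(G)\ge 1$.

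For the upper bound I would use the hypothesis $\mathfrak{N}\subseteq\mathfrak{F}$. Since $G/G^\mathfrak{N}$ is nilpotent, it lies in $\mathfrak{F}$, so by minimality of the residual $G^\mathfrak{F}\subseteq G^\mathfrak{N}$. As $G^\mathfrak{F}$ is characteristic in $G$ it is normal in $G^\mathfrak{N}$, whence $h(G^\mathfrak{F})\le h(G^\mathfrak{N})$ by the monotonicity part of Lemma~\ref{lem1} (applied to $\gamma=\mathrm{F}$, the Fitting radical, for which $h_\gamma=h$). Finally, for a non-unit soluble group one has $h(G^\mathfrak{N})=h(G)-1$, giving $n_\mathfrak{F}(G)=h(G^\mathfrak{F})\le h(G)-1$. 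This is the step where the hypothesis that $\mathfrak{F}$ contains all nilpotent groups is essential, and it is precisely what buys the saving of one unit that turns the naive bound $n+2$ into the sharp $n+1$.

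For the lower bound I would use the bounded-height hypothesis. By definition $M/M^\mathfrak{F}\in\mathfrak{F}$ is soluble, so $h(M/M^\mathfrak{F})\le n$; the subadditivity part of Lemma~\ref{lem1} then gives $h(M)\le h(M^\mathfrak{F})+h(M/M^\mathfrak{F})\le n_\mathfrak{F}(M)+n$, i.e.\ $n_\mathfrak{F}(M)\ge h(M)-n$. Combining the two estimates with Doerk's inequality completes the argument, and the degenerate cases (for instance $G\in\mathfrak{F}$, or $M=1$, or $G$ nilpotent) are absorbed automatically, since every inequality used holds with the natural convention $h(1)=0$. The only real subtlety, and the thing one must spot to obtain the sharp constant, is to route the estimate for $G$ through the nilpotent residual $G^\mathfrak{N}$ rather than bounding $h(G^\mathfrak{F})$ by $h(G)$ directly; after that insight the proof is routine bookkeeping with the monotonicity and subadditivity of the nilpotent length.
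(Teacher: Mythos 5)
Your proof is correct and follows essentially the same route as the paper: bound $n_\mathfrak{F}(G)=h(G^\mathfrak{F})\le h(G)-1$ via $G^\mathfrak{F}\subseteq G^{\mathfrak{N}}$, bound $n_\mathfrak{F}(M)=h(M^\mathfrak{F})\ge h(M)-n$ from the height hypothesis on soluble $\mathfrak{F}$-groups together with subadditivity, and combine with Doerk's inequality $h(G)-h(M)\le 2$. The paper's wording differs only cosmetically (it phrases the second bound as $H^{\mathfrak{N}^n}\subseteq H^{\mathfrak{F}}$ and the first as ``$h(G)=h(G^{\mathfrak{F}})$ forces $G\simeq 1$''), so no further comment is needed.
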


\begin{proof}
  Note that $H^{\mathfrak{N}^n}\subseteq H^\mathfrak{F}$ for any group $H$. It means that $h(H)-h(H^\mathfrak{F})\leq n$ for any group $H$ by Lemma \ref{lem1}. If $h(G)=h(G^\mathfrak{F})$, then $G\simeq 1$ and has no maximal subgroups. Assume that $G\not\simeq 1$. Then $1\leq h(G)-h(G^\mathfrak{F})\leq n$, $h(M)-h(M^\mathfrak{F})\leq n$ and $h(G)-h(M)\leq 2$. So
  $(h(G)-h(G^\mathfrak{F}))-(h(M)-h(M^\mathfrak{F}))\geq 1-n$ or $n+1\geq h(G)-h(M)+n-1\geq h(G^\mathfrak{F})-h(M^\mathfrak{F})$. Thus $n_\mathfrak{F}(G)-n_\mathfrak{F}(M)\leq n+1$.
\end{proof}

\begin{example}
  There exists formations  $\mathfrak{F}$ for which the value $n_\mathfrak{F}(G)-n_\mathfrak{F}(M)$ is bouded but not by 2.
Let $p$ be a prime and $\mathfrak{F}$ be a class of all $p$-closed soluble groups of nilpotent length at most 3. Then $\mathfrak{F}$ is a hereditary saturated formation and $n_\mathfrak{F}(G)-n_\mathfrak{F}(M)\leq 4$ by Proposition \ref{prop1} for any soluble group $G$ and its maximal subgroup $M$.

Let $G_4$ and $M_3$ be the same as in the proof of Theorem \ref{thm3}. Note that $h(M)\leq 3$ and hence $M^\mathfrak{F}=1$. Therefore $n_\mathfrak{F}(G)-n_\mathfrak{F}(M)=3>2$. \end{example}

In the view of this example  the following question seems interesting:

\begin{quest}\label{ques2}
  Describe all hereditary saturated formations $\mathfrak{F}$ such that there exists a constant $n$ with $n_\mathfrak{F}(G)-n_\mathfrak{F}(M)\leq n$ for any soluble group $G$ and its maximal subgroup $M$. For such formation $\mathfrak{F}$ do there exists a constant $m$ with $h(G)\leq m$ for every soluble $\mathfrak{F}$-group $G$.
\end{quest}

Recall that $\mathfrak{N}_\sigma$ denotes the formation of all $\sigma$-nilpotent groups. With the help of  Corollary \ref{cor2} one can prove that  $n_\sigma(G, \mathfrak{N}_\sigma)-n_\sigma(M, \mathfrak{N}_\sigma)\in\{0, 1, 2\}$ for any $\sigma$-soluble group $G$ and its maximal subgroup $M$.

\begin{quest}
Consider analogues of Questions \ref{ques1} and \ref{ques2} for the  $n_\sigma(G, \mathfrak{F})$.
\end{quest}
\bibliographystyle{plain}
\bibliography{len2}

\end{document}